\theoremstyle{plain}
\newtheorem{thm}{Theorem}[section]
\newtheorem{defn}[thm]{Definition}
\newtheorem{prop}[thm]{Proposition}
\newtheorem{them}[thm]{Theorem}
\theoremstyle{remark}
\newtheorem{ex}[thm]{Example}
\newtheorem{rmk}[thm]{Remark}
\newenvironment{usethmcounterof}[1]{%
  \thm}{\endthm\addtocounter{thm}{-1}}
\newcommand{\Z}{\mathbb{Z}}
\newcommand{\R}{\mathbb{R}}
\newcommand{\C}{\mathbb{C}}
\newcommand{\CF}{\mathrm{CF}}
\newcommand{\im}{\mathrm{Im}}
\title{Uniqueness of holomorphic quilts lifted from holomorphic bigons on surfaces}
\author{Zuyi Zhang\thanks{Beijing International Center for Mathematical Research, Beijing University, China. \emph{Email: zhangzuyi1993@hotmail.com}}\thanks{MSC class: 53D37 53D40 32Q65 57K41}}
\date{ }
\begin{document}

\maketitle

\begin{abstract}
   In the previous paper \cite{zhang2024construction}, the author constructed holomorphic quilts from the bigons of the Lagrangian Floer chain group after performing Lagrangian composition. This paper proves the uniqueness of such holomorphic quilts. As a consequence, it provides a combinatorial method for computing the boundary map of immersed Lagrangian Floer chain groups when the symplectic manifolds are closed surfaces. One outcome is the construction of many examples exhibiting figure-eight bubbling, which also confirms a conjecture of Cazassus–Herald–Kirk–Kotelskiy \cite{cazassus2020correspondence}.
\end{abstract}

\section{Introduction}
Lagrangian Floer homology was first defined by Floer, based on Gromov’s theory of pseudo-holomorphic curves \cite{floer1988morse}, to solve Arnold’s conjecture. When the symplectic manifold is a closed surface, Lagrangian Floer theory becomes combinatorial and has been studied extensively by Abouzaid \cite{abouzaid2008fukaya}. A natural question arises: when the symplectic manifold is a Cartesian product of two closed surfaces, can the Lagrangian Floer theory still be computed combinatorially? By combining the results of this paper with those of \cite{zhang2024construction}, we provide a method to do so in the case where one of the Lagrangians is a Cartesian product of closed curves from the two surfaces.

In \cite{zhang2024construction}, the author constructed holomorphic quilts from the bigons of the Lagrangian Floer chain group after performing Lagrangian composition. This paper establishes the uniqueness of such holomorphic quilts. The main theorem is as follows.

\begin{them}\label{thm:main}
Let $(F_1.\omega_1)$ and $(F_2,\omega_2)$ be two symplectic closed surfaces equipped with compatible almost complex structures. Suppose $L_i\looparrowright F_i$ $\mathrm{(}i=1,2\mathrm{)}$ and $F\looparrowright (F_1\times F_2,\omega_1\times(-\omega_2))$ are Lagrangian immersions such that $L_1\times L_2$ intersects $F$ transversely in $F_1\times F_2$. Furthermore, assume $F$ and $L_2$ are composable. Assume that
    \[
    u(x,y):\R\times[0,1]\rightarrow F_2
    \]
    is a holomorphic map such that     
    \begin{itemize}
        \item $\lim_{x\rightarrow\pm\infty}u(x,y)=x_{\pm}$, where $x_\pm$ are two points in the generalized intersection of $L_1\circ F$ and $L_2$,
        \item $u$ has its boundary in $L_1\circ F$ and $L_2$ $($Definition \ref{def:lagbd}$)$,
        \item the image of $u$ is contained in the image of $F\looparrowright F_1\times F_2$ after projecting to the first factor.
    \end{itemize} 
    Then there is a unique holomorphic map $\tilde u(x,y):\R\times[0,1]\rightarrow F_1\times F_2$ such that
    \begin{itemize}
        \item $\lim_{x\rightarrow\pm\infty}\tilde u(x,y)=x_{\pm}$, where $\tilde x_\pm$ are two points in the generalized intersection of $L_1$ and $F\circ L_2$ corresponding to $x_\pm$ by using the canonical identification in Proposition \ref{prop:sg},
        \item $\tilde u$ has its boundary in $L_1\times L_2$ and $F$,
        \item the image of $\tilde{u}|_{\R\times\{0\}}$ in the $L_2$ component is the same as $\im(u|_{\R\times\{1\}})$ in the sense of set. 
    \end{itemize}
\end{them}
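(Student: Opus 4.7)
The plan is to construct $\tilde u$ from $u$ by lifting through the Lagrangian correspondence $F$ and folding with a reflection of $u$, and then to establish uniqueness via Schwarz reflection and unique continuation for $J$-holomorphic strips on Riemann surfaces. Heuristically the construction realizes the Wehrheim--Woodward strip-shrinking dictionary in reverse: passing to the composition $L_1\circ F$ collapses a thin $F_1$-patch of a two-patch quilt, and we must recover that patch from $u$.

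\emph{Existence.} By $\im(u)\subset\pi_{F_2}(F)$ and composability of $F$ and $L_2$, the projection $\pi_{F_2}|_F$ is a local diffeomorphism on the relevant sheet, so $u$ lifts uniquely to a smooth $\hat u\colon\R\times[0,1]\to F$ with $\pi_{F_2}\hat u=u$ and $\pi_{F_1}\hat u(x,0)\in L_1$, the sheet through $L_1$ being singled out by matching asymptotic ends. Set $q(x):=\pi_{F_1}\hat u(x,0)$. The image $\pi_{F_1}\hat u(\R\times[0,1])\subset F_1$ is a topological bigon bounded by two arcs of $L_1$ meeting at the $F_1$-coordinates of $\tilde x_\pm$. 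By the construction in \cite{zhang2024construction}, this bigon admits a $J_1$-holomorphic parametrization $u_1\colon\R\times[0,1]\to F_1$ with $u_1(x,0)\in L_1$ and $u_1(x,1)=q(x)$; in the surface setting this reduces to conformal uniformization of the bigon respecting the corner data at $\tilde x_\pm$. Then
\[
\tilde u(x,y):=\bigl(u_1(x,y),\,u(x,1-y)\bigr)
\]
is $(J_1\oplus(-J_2))$-holomorphic componentwise (the reflection on the second factor converts $J_2$-holomorphicity of $u$ into $(-J_2)$-holomorphicity), with $\tilde u(x,0)=(u_1(x,0),u(x,1))\in L_1\times L_2$, $\tilde u(x,1)=(q(x),u(x,0))=\hat u(x,0)\in F$, matching asymptotic limits $\tilde x_\pm$, and the $L_2$-factor of $\tilde u|_{y=0}$ tracing $\im(u|_{y=1})$.

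\emph{Uniqueness.} Let $\tilde v$ be another holomorphic map satisfying the three bullets. Its second component $\tilde v_2$ is $(-J_2)$-holomorphic and the trace $\tilde v_2|_{y=0}$ parametrizes a curve whose image equals $\im(u|_{y=1})\subset L_2$ as a set, with matching asymptotic ends. Schwarz reflection of $\tilde v_2$ and of $\tilde u_2=u(\cdot,1-\cdot)$ across $L_2$ extends both to holomorphic maps on a two-sided neighborhood of $\R\times\{0\}$; the set-theoretic boundary matching upgrades to a pointwise identity because both traces are conformal parametrizations of the same arc with matching asymptotic ends, removing the residual reparametrization freedom of the strip. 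Unique continuation then forces $\tilde v_2=\tilde u_2$. With that, the seam condition $(\tilde v_1(x,1),\tilde v_2(x,1))\in F$ combined with $\tilde v_2(x,1)=u(x,0)$ and composability forces $\tilde v_1(x,1)=q(x)=u_1(x,1)$, after which a further Schwarz reflection across $L_1$ along $y=0$ and unique continuation give $\tilde v_1=u_1$.

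\emph{Main obstacle.} The principal technical difficulty is producing the auxiliary strip $u_1$ in the existence step: given the smooth curve $q=\pi_{F_1}\hat u(\cdot,0)$ in $L_1$, one must find a $J_1$-holomorphic extension to the strip with $L_1$-boundary on the opposite side and exactly matching the prescribed trace on $y=1$. The surface hypothesis is essential here, reducing the question to uniformization of a topological disc in the Riemann surface $F_1$ bounded by Jordan arcs of $L_1$; the compatibility of the conformal parametrization with $q$ rests on $q$ inheriting an analytic parametrization from $u$ via $\hat u$. The analogous subtlety in the uniqueness half is upgrading the set-theoretic image matching to a pointwise identification, resolved by Schwarz reflection together with the asymptotic pinning at $\tilde x_\pm$.
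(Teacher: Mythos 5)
Your uniqueness step has a genuine gap at its crux. From the hypotheses you only know that $\tilde v_2|_{y=0}$ and $\tilde u_2|_{y=0}$ have the same \emph{image} in $L_2$ and the same asymptotic ends; you then assert this "upgrades to a pointwise identity," but no mechanism is supplied. Two holomorphic strips can have $y=0$ traces sweeping the same arc of $L_2$ while being entirely different maps: the $y=1$ boundary is only constrained to lift to the correspondence $F$, which is not a fixed curve, so the two candidate quilts may place their seams differently and have different interior images in $F_2$. Schwarz reflection across $L_2$ and unique continuation only finish the job once you already have pointwise agreement of the traces on an arc --- which is exactly what is missing. The paper's proof confronts precisely this difficulty: it runs the $F_1$-strip-shrinking on \emph{both} candidate quilts, excises keyhole neighborhoods of the finitely many points where figure-eight bubbles form so that the truncated domains are simply connected, and then applies the Riemann mapping theorem with a three-boundary-point normalization to force $u_2^t=w_2^t$, after which the seam condition and one more application of the Riemann mapping theorem determine the $F_1$-patch. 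Your argument never engages with the bubbling at all, which is the stated reason the naive (Wehrheim--Woodward style) comparison fails here.

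The existence half is also not right as written, although existence is really the content of \cite{zhang2024construction} and not of this theorem. First, $\pi_{F_2}|_F=g_2$ is not a local diffeomorphism on the relevant sheet: it has fold singularities at the bifold points (Definition \ref{def:bifold}), and Section \ref{sec:3} shows these are unavoidable and are the very source of the figure-eight bubbles; so the lift $\hat u$ does not exist in the form you claim. Second, setting $\tilde u_2(x,y)=u(x,1-y)$ makes the $F_2$-patch of the quilt equal to \emph{all} of $u$ and forces the seam to project onto the $L_1\circ F$-edge of $u$; the $F_1$-component of that seam then lies on $l_1(L_1)$, and your auxiliary patch $u_1$ must have its entire $y=1$ trace prescribed equal to $q$. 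Since $q$ takes values in a real-analytic curve, Schwarz reflection plus unique continuation show that prescribing the full $y=1$ trace already determines $u_1$ on the whole strip, leaving no freedom to also impose the $L_1$-boundary condition at $y=0$; generically no such $u_1$ exists. The correct quilt's $F_2$-patch is a proper sub-bigon of $u$ sharing only the $L_2$-edge --- which is precisely why the theorem's third bullet asserts agreement only of the $L_2$-boundary images and why the bubbling phenomenon appears in the first place.
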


The idea of the proof is as follows. Unlike the argument in \cite{zhang2024construction}, where the method of Wehrheim–Woodward \cite{wehrheim2009floer} applies directly, the situation in this paper involves figure-eight bubbling, which the Wehrheim–Woodward approach explicitly excludes. Their argument requires the absence of figure-eight bubbling, but such bubbling necessarily occurs in our setting. Suppose on the contrary, there are two holomorphic maps $(u_1,u_2)$ and $(w_1,w_2)$ satisfy the conclusion of the main theorem. Then $u_2$ and $w_2$ are holomorphic discs sharing the $L_2$ side with $u$. During the strip-shrinking process, one can remove the keyhole neighborhoods (Figure \ref{fig:keyholwremove}) around the points that contributes to the bubblings from the domain disc and the Riemann mapping theorem guarantees uniqueness.

One of the motivations for this work comes from a conjecture of Bottman-Wehrheim \cite{bottman2018gromov}. This conjecture states that by adding bounding cochains corresponding to the figure-eight bubblings (c.f. \cite{bottman2018gromov}) to twist the boundary maps of the immersed Lagrangian Floer chain group and the immersed quilted Lagrangian Floer chain group, these two chain groups become chain complexes and are isomorphic. 

Quilted Lagrangian Floer theory was introduced by Wehrheim-Woodward \cite{wehrheim2010quilted}. In Wehrheim-Woodward \cite{wehrheim2009floer}, they showed that, under certain additional assumptions (e.g. monotonicity), these two theories are isomorphic when all Lagrangians are embedded. Fukaya later proved the isomorphism indicated by Bottman and Wehrheim's conjecture in an algebraic manner \cite{fukaya2017unobstructed}. However, how this isomorphism relates to strip shrinking and the associated figure-eight bubbling remains unclear. Our goal is to understand this relationship.

Another motivation concerns the Atiyah–Floer conjecture, which connects Lagrangian Floer theory with instanton Floer theory. On the Lagrangian Floer side, the relevant symplectic manifold (possibly with singularities) is given by the space of traceless $\mathrm{SU}(2)$ representations of the fundamental group of a closed surface $F$, modulo conjugations. Goldman \cite{goldman1984symplectic} demonstrated that this space can be equipped with a symplectic structure. The Lagrangians are defined as the restrictions of the traceless $\mathrm{SU}(2)$ representations of the fundamental group (modulo conjugations) of a 3-manifold to the boundary, where the boundary of this 3-manifold is $F$. Here, the Lagrangians are typically immersions into the traceless $\mathrm{SU}(2)$ representations of $\pi_1(F)$ modulo conjugations. To understand the Lagrangian Floer side of the Atiyah-Floer conjecture, Lagrangian compositions and the quilted Lagrangian Floer theory is necessary. Examples can be found in K. Smith \cite{smith2023perturbed}, Cazassus-Herald-Kirk-Kotelskiy \cite{cazassus2020correspondence}, as well as Herald-Kirk \cite{herald2024endomorphism}. In Cazassus-Herald-Kirk-Kotelskiy \cite{cazassus2020correspondence}, a conjecture about the figure-eight bubblings was proposed (Section 10.4 in \cite{cazassus2020correspondence}). This paper gives a positive answer to this conjecture.

The paper is organized as follows. In Section \ref{sec:2}, we introduce the definitions of immersed Lagrangian Floer theory and quilted Lagrangian Floer theory. Section \ref{sec:3} discusses the strip-shrinking and examples with figure-eight bubblings. Section \ref{sec:4} contains the proof of the main theorem. We apply the main theorem to calculate some examples of the immersed quilted Lagrangian Floer theory as well as to prove a conjecture of Cazassus-Herald-Kirk-Kotelskiy \cite{cazassus2020correspondence} in Section \ref{sec:5}.\\

\noindent\textbf{Acknowledgments:} The author would like to thank Paul Kirk, Chris Woodward, and Nate Bottman for helpful discussions.

\section{Preliminary}\label{sec:2}
The aim of this section is to introduce the necessary background of immersed Lagrangian Floer theory and immersed quilted Lagragnian-Floer theory.\\

\begin{defn}
Suppose that $l:L\looparrowright X$ is an immersion from an $n$ dimensional manifold $L$ to a 2n dimensional symplectic manifold $(X,\omega)$. The immersion $l:L\looparrowright X$ is called a {\bf Lagrangian immersion} if
\[
l^*\omega=0.
\]
\end{defn}

\begin{ex}\label{exp:2.1}
Any immersed curve $C$ in a symplectic surface $(F,\omega)$ is Lagrangian. Since $\omega$ is skew-symmetric, then $\omega(v_x,v_x)=0$ for all $x\in C$ and $v_x\in T_xC$.
\end{ex}


\begin{defn}\label{def:lagint}
Let $l_1:L_1\looparrowright X$ and $l_2:L_2\looparrowright X$ be two Lagrangian immersions into a symplectic manifold $X$. The {\bf generalized intersection} $L_1\times_X L_2$ is defined as the fiber product
\[
L_1\times_X L_2=\{(x_1,x_2)\in L_1\times L_2|\ l_1(x_1)=l_2(x_2)\}.
\]
\end{defn}

\begin{rmk}
Since $l_1:L_1\looparrowright X$ and $l_2:L_2\looparrowright X$ are immersions, $l_i^{-1}(x)$ may contain more than one element for $x\in l_1(L_1)\cap l_2(L_2)$ for $i=1,2$. It is necessary to distinguish these preimages. So we use the term ``generalized intersection" here.
\end{rmk}




\begin{defn}
Let $(X,\omega)$ be a symplectic manifold. An {\bf almost complex structure} is a bundle map $J:TX\rightarrow TX$ such that 
\[
J^2=-id.
\]
An almost complex structure $J$ is called {\bf $\mathbf\omega$-compatible} if
\begin{itemize}
    \item $\omega(\cdot,J\cdot)$ is a Riemannian metric,
    \item $\omega(J\cdot,J\cdot)=\omega(\cdot,\cdot)$.
\end{itemize}

\end{defn}

Let $(X,\omega)$ be a symplectic manifold. Then the space of $\omega$-compatible almost complex structures is nonempty and contractible \cite{mcduff2017introduction}.

\begin{defn}
Let $l_1:L_1\looparrowright X$ and $l_2:L_2\looparrowright X$ be two Lagrangian immersions into a symplectic manifold $X$. Let $(x_1,x_2)\in L_1\times_XL_2$ be an generalized intersection point of these two Lagrangian immersions. We say {\bf $\mathbf{L_1}$ intersects $\mathbf{L_2}$ transversely at $\mathbf{(x_1,x_2)}$} if 
\[
\mathrm{Im}(df^1)_{x_1}\cap\mathrm{Im}(df^2)_{x_2}=\{0\}.
\]
If the above equation holds for every generalized intersection point, then {\bf $\mathbf{L_1}$ intersects $\mathbf{L_2}$ transversely}.
\end{defn}

\begin{defn}
Let $X$ be a symplectic manifold with compatible almost complex structure $J$. The complex structure $j$ over the strip $\mathbb R\times [0,1]$ is induced from the standard complex structure over $\C$. Then $u:\mathbb R\times [0,1]\rightarrow X$ is called a {\bf J-holomophic map} if
\[
\bar\partial_Ju:=\frac{1}{2}(du+J\circ du\circ j)=0.
\]
\end{defn}

\begin{defn}\label{def:lagbd}
Let $u$ be a smooth map from the strip $\mathbb R\times [0,1]$ to a symplectic manifold $X$ with a compatible almost complex structure. Assume $L_i\looparrowright X$ are 2 immersed Lagrangian submanifolds, for $i=1,2$. We say that {\bf $\mathbf u$ has its boundary in $\mathbf L_i$} if there are lifts $\tilde u_i$ such that the following diagram commutes:
\begin{equation*}
    \xymatrix{
    &L_i\ar[d]\\
    \mathbb R\times \{i-1\}\ar[r]^{\ \ \ u}\ar@{-->}[ur]^{\tilde u_i}&X,
    }
\end{equation*}
for $i=1,2$.
\end{defn}

\begin{defn}
    Let $l_1:L_1\looparrowright X$ and $l_2:L_2\looparrowright X$ be two Lagrangian immersions into a symplectic manifold $X$ such that $L_1$ intersects $L_2$ transversely. Let $J$ be a $\omega$-compatible almost complex structure on $X$. A \textbf{holomorphic bigon connecting two elements} $\mathbf{x_\pm}$ is a $J$-homorphic map $u:\R\times[0,1]\rightarrow X$ satisfying the following conditions:
    \begin{itemize}
        \item $u$ has its boundary in $L_i$, $i=1,2$,
        \item $\lim_{s\rightarrow\pm\infty}u(s,\cdot)=x_\pm$,
        \item the Fredholm index of the Cauchy Riemann operator is 1.
    \end{itemize}
\end{defn}


\begin{defn}
Let $L_1\looparrowright X$ and $L_2\looparrowright X$ be two Lagrangian immersions into a symplectic manifold $X$. Assume that $L_1$ and $L_2$ intersect transversely. The {\bf Lagrangian Floer chain group} $\CF(L_1,L_2;X)$ is defined as the $\Z_2$ vector space generated by points in $L_1\times_XL_2$.
\end{defn}

Next we define the boundary maps of Lagrangian Floer chain groups. Let $X$ be a symplectic manifold with compatible almost complex structure $J$. Let $L_1\looparrowright X$ and $L_2\looparrowright X$ be two Lagrangian immersions. For two generalized intersection points $x_+,x_-\in L_1\times_X L_2$, denote $\mathcal{X}(x_+,x_-)$ as the set of $J$-holomorphic maps $u:\mathbb R\times [0,1]\rightarrow X$ connecting $x_\pm$ with the following properties:
\begin{itemize}
    \item $\bar\partial_Ju=0$,
    \item $\int u^*\omega<+\infty$,
    \item $u$ has its boundary in $L_i$, $i=1,2$,
    \item $\lim_{t\rightarrow\pm\infty}u(t,x)=x_\pm$.
\end{itemize}

There is a translation $\R$ action on $\mathcal{X}(x_+,x_-)$:
\begin{align*}
    \mathcal{X}(x_+,x_-)\times\R&\rightarrow\mathcal{X}(x_+,x_-)\\
    (u(\cdot,\cdot),t)\ \ &\mapsto u(\cdot+t,\cdot).
\end{align*}
The quotient space of $\mathcal{X}(x_+,x_-)$ under this $\R$ action is the moduli space connecting $x_+$ and $x_-$:
\[
\mathcal{M}(x_+,x_-)=\mathcal{X}(x_+,x_-)/\mathbb R.
\]
If $\dim\mathcal{M}(x_+,x_-)=0$, then for generic $J$-holomorphic structures on $X$, the moduli space $\mathcal{M}(x_+,x_-)$ is a compact, smooth manifold \cite{zhang2024construction}. Consequently, $\mathcal{M}(x_+,x_-)$ contains finitely many elements when $\dim\mathcal{M}(x_+,x_-)=0$. The 
\textbf{boundary map} $\mu^1$ of the Lagrangian Floer complex $\CF(L_1,L_2;X)$ is defined as 
\begin{align*}
\mu^1:\CF(L_1,L_2;X)&\rightarrow \ \ \ \ \ \ \ \ \ \ \ \ \CF(L_1,L_2;X)\\
x_+\ \ \ \ \ \ &\mapsto\sum_{\substack{x_-\in L_1\cap L_2\\ \dim\mathcal{M}(x_+,x_-)=0}}\#_{Mod\, 2}\mathcal{M}(x_+,x_-)x_-,
\end{align*}
where $\#_{Mod\, 2}\mathcal{M}(x_+,x_-)$ is the number of the elements in $\mathcal{M}(x_+,x_-)$ modulo $2$.\\

When the symplectic manifold is two-dimensional, the number of $J$-holomorphic maps connecting two generalized intersection points $x_+$ and $x_+$ of Lagrangian immersions corresponds to the number of discs connecting these two points $x_\pm$,  according to the
Riemann mapping theorem.\\

\textbf{The following provides the definitions of Lagrangian correspondences and Lagrangian compositions}. The concept of Lagrangian correspondence is essential for defining Lagrangian compositions. Lagrangian correspondences are specific types of Lagrangian immersions, as defined below. When both terms— Lagrangian correspondence and Lagrangian immersion— are applicable, we use “Lagrangian correspondence” to emphasize the composition aspect of Lagrangian immersions.

\begin{defn}
Given symplectic manifolds $(X_0, \omega_0)$ and $(X_1, \omega_1)$, a {\bf Lagrangian correspondence} from $X_0$ to $X_1$ is a Lagrangian immersion
\[
l=(l_0,l_1):L\rightarrow X_0
\times X_1,
\]
where $X_0\times X_1$ is equipped with the symplectic structure $\omega_0\times(-\omega_1)$.
\end{defn}

\begin{defn}\label{def:lacom}
Let $(X_0, \omega_0)$, $(X_1, \omega_1)$, $(X_2, \omega_2)$ be three symplectic manifolds. Then $(X_0\times X_1,\omega_0\times (-\omega_1))$, $(X_1\times X_2,\omega_1\times (-\omega_2))$, $(X_0\times X_2,\omega_0\times (-\omega_2))$ are symplectic manifolds. Assume that 
\[
l^0=(l_0^0,l^0_1):L_{01}\rightarrow X_0
\times X_1\quad and\quad l^1=(l_1^1,l^1_2):L_{12}\rightarrow X_1
\times X_2
\]
are Lagrangian correspondences. The {\bf Lagrangian composition} $L_{01}\circ L_{12}$ of $L_{01}$ and $L_{12}$ is defined as
\begin{equation}\label{equ:lagc}
    l^0\circ l^1:=(l^0_0,l^1_2):L_{01}\circ L_{12}\rightarrow X_0\times X_2,
\end{equation}
where 
\[
L_{01}\circ L_{12}:=L_{01}\times_{X_1} L_{12}=\{(x,y)\in L_{01}\times L_{12}|l^0_1(x)=l^1_1(y)\}.
\]
\end{defn}

The composition $l^0\circ l^1:L_{01}\circ L_{12}\looparrowright X_0\times X_1$ in Definition \ref{def:lacom} is not always a Lagrangian immersion. The following proposition gives a sufficient criterion.

\begin{prop}[\cite{wehrheim2010functoriality}]\label{prop:lagco}
Let $(X_0, \omega_0)$, $(X_1, \omega_1)$ and $(X_2, \omega_2)$ be three symplectic manifolds. Then $(X_0\times X_1,\omega_0\times (-\omega_1))$, $(X_1\times X_2,\omega_1\times (-\omega_2))$ and $(X_0\times X_2,\omega_0\times (-\omega_2))$ are symplectic manifolds. Suppose that 
\[
\Delta_{X_1}=\{(x,x)|x\in X_1\}\subset X_1\times X_1
\]
is the diagonal and 
\[
l^0=(l_0^0,l^0_1):L_{01}\rightarrow X_0
\times X_1,l^1=(l_1^1,l^1_2):L_{12}\rightarrow X_1
\times X_2
\]
are Lagrangian correspondences. If $l^0\times l^1$ is transverse to $X_0\times\Delta_{X_1}\times X_2$, then 
\begin{itemize}
    \item $L_{01}\circ L_{12}$ is a smooth manifold,
    \item $l^0\circ l^1:L_{01}\circ L_{12}\looparrowright X_0\times X_2$ is a Lagrangian immersion.
\end{itemize}
\end{prop}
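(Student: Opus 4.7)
The plan is to derive the two conclusions directly from the transversality hypothesis: smoothness from the transverse preimage theorem, and the Lagrangian immersion property from a pointwise linear-algebra computation at each tangent space.

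First I would observe that $L_{01} \circ L_{12}$ is exactly the preimage of $X_0 \times \Delta_{X_1} \times X_2$ under the smooth map $l^0 \times l^1 : L_{01} \times L_{12} \to X_0 \times X_1 \times X_1 \times X_2$. Since this map is transverse to $X_0 \times \Delta_{X_1} \times X_2$ by hypothesis, the preimage theorem gives a smooth submanifold of $L_{01} \times L_{12}$ of codimension $\dim X_1$, and a dimension count yields $\dim (L_{01} \circ L_{12}) = \tfrac{1}{2}\dim X_0 + \tfrac{1}{2}\dim X_2$, the correct dimension for a Lagrangian in $(X_0 \times X_2,\, \omega_0 \times (-\omega_2))$.

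Next I would verify that $(l^0 \circ l^1)^*(\omega_0 \times (-\omega_2)) = 0$. Let $(v,w)$ and $(v',w')$ be tangent vectors in $T_{(p,q)}(L_{01}\circ L_{12}) \subset T_p L_{01} \oplus T_q L_{12}$; these satisfy the coupling $dl^0_1(v) = dl^1_1(w)$ and its primed analogue. Writing out the Lagrangian identities for $L_{01}$ (with form $\omega_0 \times (-\omega_1)$) and for $L_{12}$ (with form $\omega_1 \times (-\omega_2)$) separately and summing them, the two $\omega_1$ terms cancel by the coupling, leaving precisely the Lagrangian identity for $l^0\circ l^1$ in $X_0 \times X_2$.

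The main obstacle, and where transversality really enters, is showing that $l^0\circ l^1$ is an immersion; injectivity of the differential does not follow from injectivity of $dl^0$ and $dl^1$ alone. I would first translate transversality into the linear-algebra statement $\im(dl^0_1) + \im(dl^1_1) = T_{x_1}X_1$ at each relevant $x_1 = l^0_1(p) = l^1_1(q)$, using the canonical identification of the normal bundle of $X_0 \times \Delta_{X_1} \times X_2$ with $TX_1$. Given a kernel vector $(v,w)$ of $d(l^0\circ l^1)$, so $dl^0_0(v) = 0$ and $dl^1_2(w) = 0$, I would feed these into the Lagrangian identities for $L_{01}$ and $L_{12}$ to conclude that the common value $c := dl^0_1(v) = dl^1_1(w)$ is $\omega_1$-orthogonal to $\im(dl^0_1)$ and to $\im(dl^1_1)$, hence to their sum, which by transversality is all of $T_{x_1}X_1$. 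Non-degeneracy of $\omega_1$ then forces $c = 0$, so $dl^0(v) = 0$ and $dl^1(w) = 0$, and injectivity of the two Lagrangian immersions gives $v = 0$ and $w = 0$. This symplectic-orthogonality argument is the heart of the proof; everything else is bookkeeping.
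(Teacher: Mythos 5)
Your proof is correct. The paper does not actually prove this proposition; it states it and attributes it to Wehrheim--Woodward (the cited functoriality paper), so there is no in-paper argument to compare against. Your argument is essentially the standard one from that reference, and every step checks out: the preimage theorem gives smoothness and the codimension-$\dim X_1$ count yields $\dim(L_{01}\circ L_{12}) = \tfrac12(\dim X_0 + \dim X_2)$; the isotropy of the composed immersion follows by adding the two Lagrangian identities and cancelling the $\omega_1$ terms along the diagonal coupling $dl^0_1(v)=dl^1_1(w)$; and the immersion property is the genuinely non-formal part, which you handle correctly by translating transversality into $\im(dl^0_1)+\im(dl^1_1)=T_{x_1}X_1$ and then showing a kernel vector of $d(l^0\circ l^1)$ produces a $c\in T_{x_1}X_1$ that is $\omega_1$-orthogonal to both images, hence to all of $T_{x_1}X_1$, hence zero by nondegeneracy, after which injectivity of $dl^0$ and $dl^1$ finishes. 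You correctly use that the Lagrangian identity for $L_{01}$ holds for all $v'\in T_pL_{01}$ (not just fiber-product-compatible ones), which is what lets you conclude orthogonality to the full image $\im(dl^0_1)$. No gaps.
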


\begin{defn}\label{def:composable}
Given Lagrangian correspondences $L_{01}$ and $L_{12}$ in $(X_0
\times X_1,\omega_0\times (-\omega_1))$ and $(X_1
\times X_2,\omega_1\times (-\omega_2))$, resp. These two Lagrangian correspondences $L_{01}$ and $L_{12}$ are said to be {\bf composable} if all the assumptions in Proposition \ref{prop:lagco} are satisfied.
\end{defn}

If $L_1\looparrowright X_1$ is a Lagrangian immersion, then $L_1$ can be regarded as a Lagrangian correspondence from $\{pt\}$ to $X_1$. Let $L_{12}\looparrowright X_1\times X_2$ be a Lagrangian correspondence. If $L_1$ and $L_{12}$ are composable, then $L_1\circ L_{12}\looparrowright X_2$ is a Lagrangian immersion. Similarly, if $L_2$ is a Lagrangian immersion into $X_2$, then $L_2$ can be viewed as a Lagrangian correspondence from $X_2$ to $\{pt\}$. If $L_2$ is composable with $L_{12}$, then $L_{12}\circ L_2\looparrowright X_1$ is a Lagrangian immersion.\\

\textbf{We conclude this section by introducing the quilted Lagrangian Floer theory}. From now on, the symplectic manifolds are assumed to be closed surfaces. Let $g_1: F\rightarrow F_1$ and $g_2:F\rightarrow F_2$ be two maps between closed surfaces. Denote $\omega_1$, $\omega_2$, $\omega_1\times(-\omega_2)$ as the symplectic forms on $F_1$, $F_2$, $F_1\times F_2$ respectively. Assume that
\[
(g_1,g_2):F\rightarrow F_1\times F_2
\] 
is a Lagrangian correspondence from $F_1$ to $F_2$. Suppose that $L_1\looparrowright F_1$ and $L_2\looparrowright F_2$ are two immersed curves, composable with $F$. Define the {\bf quilted Lagrangian Floer chain group} $\CF^Q(L_1,F,L_2;F_1,F_2)$ as the abelian group generated by elements in $F\times_{F_1\times F_2}(L_1\times L_2)$ over $\mathbb Z_2$.\\
Let $J_1,J_2,J_1\times (-J_2)$ be the almost complex structures compatible with $\omega_1,\omega_2,\omega_1\times(-\omega_2)$, respectively. Suppose that 
\[
u_i:\R\times[i-1,i]\rightarrow F_i,\ \ i=1,2
\]
are $J$-holomorphic maps such that the following diagram commute
\begin{equation*}
    \xymatrix{
    &L_1\ar[d]\\
    \mathbb R\times \{0\}\ar[r]^{\ \ \ u_1}\ar@{-->}[ur]^{\exists\tilde u_1}&F_1,
    }\ \ 
    \xymatrix{
    &L_2\ar[d]\\
    \mathbb R\times \{2\}\ar[r]^{\ \ \ u_2}\ar@{-->}[ur]^{\exists\tilde u_2}&F_2,
    }\ \ 
    \xymatrix{
    &F\ar[d]\\
    \mathbb R\times \{1\}\ar[r]^{ (u_1,u_2)}\ar@{-->}[ur]^{\exists(\Bar u_1,\Bar u_2)}&F_1\times F_2.
    }
\end{equation*}
If moreover 
\[
\lim_{x\rightarrow\pm\infty}(u_1,u_2)(x,y)=(x_\pm,y_\pm),\ \ x_\pm\in F\times_{F_1\times F_2}(L_1\times L_2),
\]
then $(u_1,u_2)$ is called a {\bf quilted holomorphic disc connecting} $\mathbf{(x_\pm,y_\pm)}$. For fixed $(x_\pm,y_\pm)\in F\times_{F_1\times F_2}(L_1\times L_2)$, the space of all such pairs $(u_1,u_2)$ connecting $(x_\pm,y_\pm)$ is denoted as $\mathcal{X}^Q((x_+,y_+),(x_-,y_-))$. The moduli space of quilted holomorphic strips connecting $x_\pm$ is defined as
\[
\mathcal{M}^Q((x_+,y_+),(x_-,y_-)):=\mathcal{X}^Q((x_+,y_+),(x_-,y_-))/\R.
\]
According to \cite{zhang2024construction}, if the expected dimension of $\mathcal{M}^Q((x_+,y_+),(x_-,y_-))$ is 0, then $\mathcal{M}^Q((x_+,y_+),(x_-,y_-))$ is a compact smooth manifold for generic $J_1$ and $J_2$. The {\bf boundary map} $\mu^1_Q$ of the quilted Lagrangian Floer complex $\CF^{Q}(L_1,F,L_2;F_1,F_2)$ is defined as
\begin{equation*}
    \begin{aligned}
    \mu^1_Q&:\CF^{Q}(L_1,F,L_2;F_1,F_2)\rightarrow\CF^{Q}(L_1,F,L_2;F_1,F_2)\\
    (x_+,y_+)&\mapsto\sum_{\substack{x_-\in F\times_{F_1\times F_2}(L_1\times L_2)\\ \dim\mathcal{M}^Q((x_+,y_+),(x_-,y_-))=0}}\#_{Mod\, 2}\mathcal{M}^Q((x_+,y_+),(x_-,y_-))(x_-,y_-).
    \end{aligned}
\end{equation*}

\begin{prop}\label{prop:sg}
Let $L_1$, $F$, $L_2$ be Lagrangian immersions in $(F_1,\omega_1)$, $(F_1\times F_2,\omega_1\times(-\omega_2))$, $(F_2,\omega_2)$, resp. Suppose $L_1$ and $F$, $F$ and $L_2$ are composable. Then the generators of the complexes $\CF(L_1,L_{12}\circ L_2; F_1)$, $\CF(L_1\circ L_{12},L_2;F_2)$, $\CF^{Q}(L_1,F,L_2;F_1,F_2)$ can be identified canonically.
\end{prop}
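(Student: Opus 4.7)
The plan is to unwind the definitions on all three sides and exhibit a single combinatorial object to which each of the three generating sets is canonically bijective. Write the Lagrangian correspondence $F\looparrowright F_1\times F_2$ in terms of its two projections $g_1:F\to F_1$ and $g_2:F\to F_2$, and denote the Lagrangian immersions of $L_1,L_2$ by $\iota_1,\iota_2$. The common object I have in mind is the triple fiber product
\[
T := \bigl\{(p_1,f,p_2)\in L_1\times F\times L_2 \;\big|\; \iota_1(p_1)=g_1(f),\ \iota_2(p_2)=g_2(f)\bigr\}.
\]

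First, by the definition of the quilted Floer chain group, the generators of $\CF^Q(L_1,F,L_2;F_1,F_2)$ are exactly the points of $F\times_{F_1\times F_2}(L_1\times L_2)$, i.e.\ triples $(f,(p_1,p_2))$ with $(g_1(f),g_2(f))=(\iota_1(p_1),\iota_2(p_2))$; reordering yields the bijection with $T$. Next, using Definition \ref{def:lacom}, the Lagrangian composition $L_1\circ F$ is the manifold $L_1\times_{F_1}F=\{(p_1,f):\iota_1(p_1)=g_1(f)\}$, immersed into $F_2$ via $(p_1,f)\mapsto g_2(f)$. Its transverse generalized intersection with $L_2$ is the set of pairs $((p_1,f),p_2)$ with $g_2(f)=\iota_2(p_2)$ and the compatibility $\iota_1(p_1)=g_1(f)$ built into $L_1\circ F$; again this is precisely $T$. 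The symmetric argument, with the roles of $L_1$ and $L_2$ exchanged and $F\circ L_2$ immersed into $F_1$ by $(f,p_2)\mapsto g_1(f)$, identifies the generators of $\CF(L_1,F\circ L_2;F_1)$ with $T$.

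I would then verify that composability in the sense of Definition \ref{def:composable}, together with the transverse intersection hypothesis appearing in the definition of the Floer chain groups, guarantees that all three fiber products are simultaneously cut out transversely as finite subsets of smooth manifolds, so the three bijections above are bijections of the actual generating sets used in the definitions of the complexes. No analytic content enters — the proposition is purely a statement about generalized intersections.

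The only real obstacle is bookkeeping: the term \emph{generalized intersection} insists on retaining the preimage data, so I must present the bijections as equalities of triples $(p_1,f,p_2)$ rather than as equalities of images in the ambient symplectic manifolds. Writing the three sides as fiber products into the single set $T$ in the manner above makes the identifications manifestly canonical — independent of any choices and compatible with swapping the two factors $F_1$ and $F_2$ — which is exactly what is needed to justify the canonical identification invoked in the hypotheses of Theorem \ref{thm:main}.
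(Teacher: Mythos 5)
Your proposal is correct and is precisely the ``direct calculation using definitions'' that the paper invokes without spelling out (the paper's proof simply cites \cite{zhang2024singularities}, Proposition~8.6). Exhibiting the triple fiber product $T$ and showing each of the three generating sets reorders into it is exactly the intended argument, so this is the same approach, just written out in full.
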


\begin{proof}
A direct calculation using definitions guarantees this proposition. Details can be found in \cite{zhang2024singularities} Proposition 8.6.
\end{proof}

\section{Figure-eight Bubblings and Strip-shrinking}\label{sec:3}
In this section, the notation for figure-eight bubblings as well as the strip-shrinking process is explained. Figure-eight bubblings was introduced by Bottman-Wehrheim 
\cite{bottman2018gromov}. They showed that the figure-eight bubblings cannot be avoided for generic almost complex structures when applying the strip-shrinking \cite{bottman2018gromov} argument to transition from holomorphic quilts to holomorphic discs. At the end of this section, we prove that figure-eight bubblings necessarily occur when performing the strip-shrinking near bifold points (Definition \ref{def:bifold}).

\begin{defn}[Bottman-Wehrheim \cite{bottman2018gromov}]
    Let $L_1$, $F$, $L_2$ be Lagrangian immersions in $(F_1,\omega_1)$, $(F_1\times F_2,\omega_1\times(-\omega_2))$, $(F_2,\omega_2)$, resp. \textbf{A figure-eight bubbling bwtween $F$ and $L_2$} $($Figure \ref{fig:fig8}$)$ is a pair of smooth maps $(w_1,w_2)$ such that:
    \begin{itemize}
        \item $w_1:\R\times[-\frac{1}{2},\frac{1}{2}]\rightarrow F_1$ and $w_1:\R\times[\frac{1}{2},+\infty]\rightarrow F_2$ are holomorphic,
        \item $w_1(s,-\frac12)$ and $(w_1(s,\frac12),w_2(s,\frac12))$ can be lifted to $ L_1$ and $F$ respectively.
    \end{itemize}
\end{defn}

\begin{figure}[htbp]
\centering
\begin{minipage}[t]{0.4\textwidth}
\centering
\includegraphics[width=4cm]{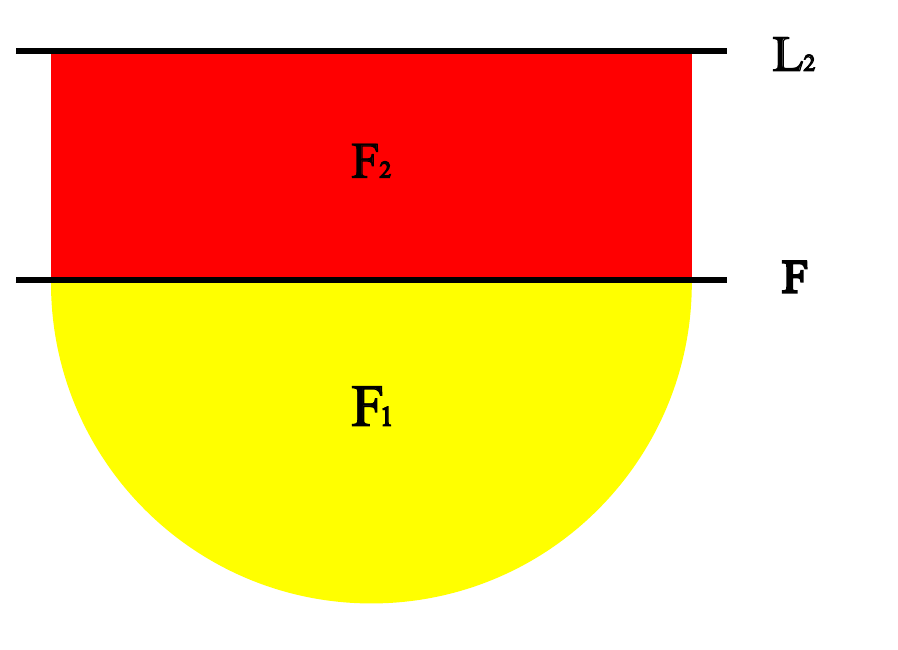}
\caption{Figur-eight bubbling.}
\label{fig:fig8}
\end{minipage}
\begin{minipage}[t]{0.5\textwidth}
\centering
\includegraphics[width=4cm]{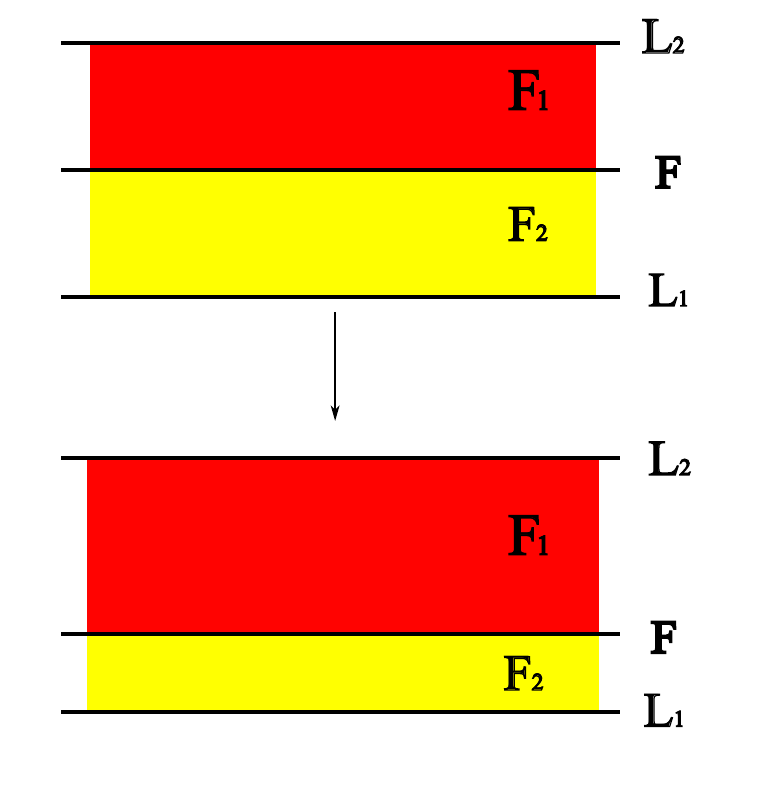}
\caption{Shrinking the lower layer.}
\label{fig:strsh}
\end{minipage}
\end{figure}

\begin{rmk}
    In the notation of Bottman-Wehrheim \cite{bottman2018gromov},
    \[
    M_0=pt,\ \ M_1=F_1,\ \ M_2=F_2.
    \]
\end{rmk}

Given a quilted holomorphic disc $(u_1,u_2)$ connecting two elements in $F\times_{F_1\times F_2}(L_1\times L_2)$, the \textbf{$F_1$-strip-shrinking} (Figure \ref{fig:strsh}) in Bottman-Wehrheim \cite{bottman2018gromov} refers to a family of smooth maps $\{(u^t_1,u^t_2)\}_{t\in[0,1]}$ such that:
\begin{itemize}
    \item $u_1^t:\R\times [\delta_t,1]\rightarrow F_1$ and $u_2^t:\R\times [1,2]\rightarrow F_2$ are holomorphic with $t\in[0,1)$, where $\delta_t\in[0,1]$ is an increasing function with respect to $t$ for $\delta_0=0$ and $\delta_1=1$,
    \item $(u^t_1,u^t_2)=(u_1,u_2)$ when $t=0$,
    \item $((u^t_1(s,\delta_t),u^t_2(s,1))$ and $((u^t_1(s,1),u^t_2(s,1))$ can be lifted to $L_1\times L_2$ and $F$, respectively.
\end{itemize}
The main theorem in Bottman-Wehrheim \cite{bottman2018gromov} states that during the strip-shrinking process, if the first derivative of $(u_1^t,u_2^t)$ becomes unbounded as $t\rightarrow1$,  then figure-eight bubblings occur between $F$ and $L_2$ for generic complex structures. The term ``complex structures" is presented here in place of ``almost complex structures" as in \cite{bottman2018gromov}, since all almost complex structures on closed surfaces are integrable.\\

Some preparations are needed before presenting an example with figure-eight bubblings.
\begin{defn}[\cite{zhang2024singularities}]
Let $f:F\rightarrow\Tilde{F}$ be a smooth map between surfaces. 
\begin{itemize}
    \item A point $x\in F$ is called a {\bf singular point} of $f$ if $df_x$ has rank strictly smaller than two.
    \item A critical point $x\in F$ is called a {\bf fold point} if there are local coordinates $x_1,x_2$ centered at $x$ and $y_1,y_2$ centered at $f(x)$ such that $f$ is given by
    \[
    (x_1,x_2)\mapsto(x_1, x_2^2).
    \]
    \item A critical point $x\in F$ is called a {\bf cusp point} if there are local coordinates $x_1,x_2$ centered at $x$ and $y_1,y_2$ centered at $f(x)$ such that $f$ is given by
    \[
    (x_1,x_2)\mapsto(x_1,x_1x_2+x_2^3).
    \]
\end{itemize}
\end{defn}

\begin{defn}[\cite{zhang2024singularities}]\label{def:bifold}
Let $(F_1,\omega_1)$ and $(F_2,\omega_2)$ be two closed symplectic surfaces. Let $g=(g_1,g_2):F\rightarrow F_1\times F_2$ be a smooth Lagrangian immersion. 
\begin{itemize}
    \item A point $x\in F$ is called {\bf bisingular} if $x$ is a critical point for both $g_1$ and $g_2$.
    \item A bisingular point $x\in F$ is called a {\bf bifold point} if $x$ is a fold point for both $g_1$ and $g_2$.
\end{itemize}
\end{defn}

\begin{rmk}
    In \cite{zhang2024singularities}, the author proved that 
    \begin{itemize}
        \item when $g=(g_1,g_2):F\rightarrow F_1\times F_2$ is a Lagrangian immersion, then all singularities of $g_1$ must be bisingular points,
        \item the bisingular points of the Lagrangian immersions $F\looparrowright F_1\times F_2$ are generically bifold points with finitely many cusps (the definition of cusps can be found in \cite{zhang2024singularities}).
    \end{itemize}
\end{rmk}

In the case where the Lagrangian immersion $F\looparrowright F_1\times F_2$ has only embedded bifold points, the following example shows that near the bifold points, there are always figure-eight bubblings when performing strip-shrinking.\\

\begin{ex}\label{exp:3.3}

\begin{figure}[H] 
\centering 
\includegraphics[width=0.6\textwidth]{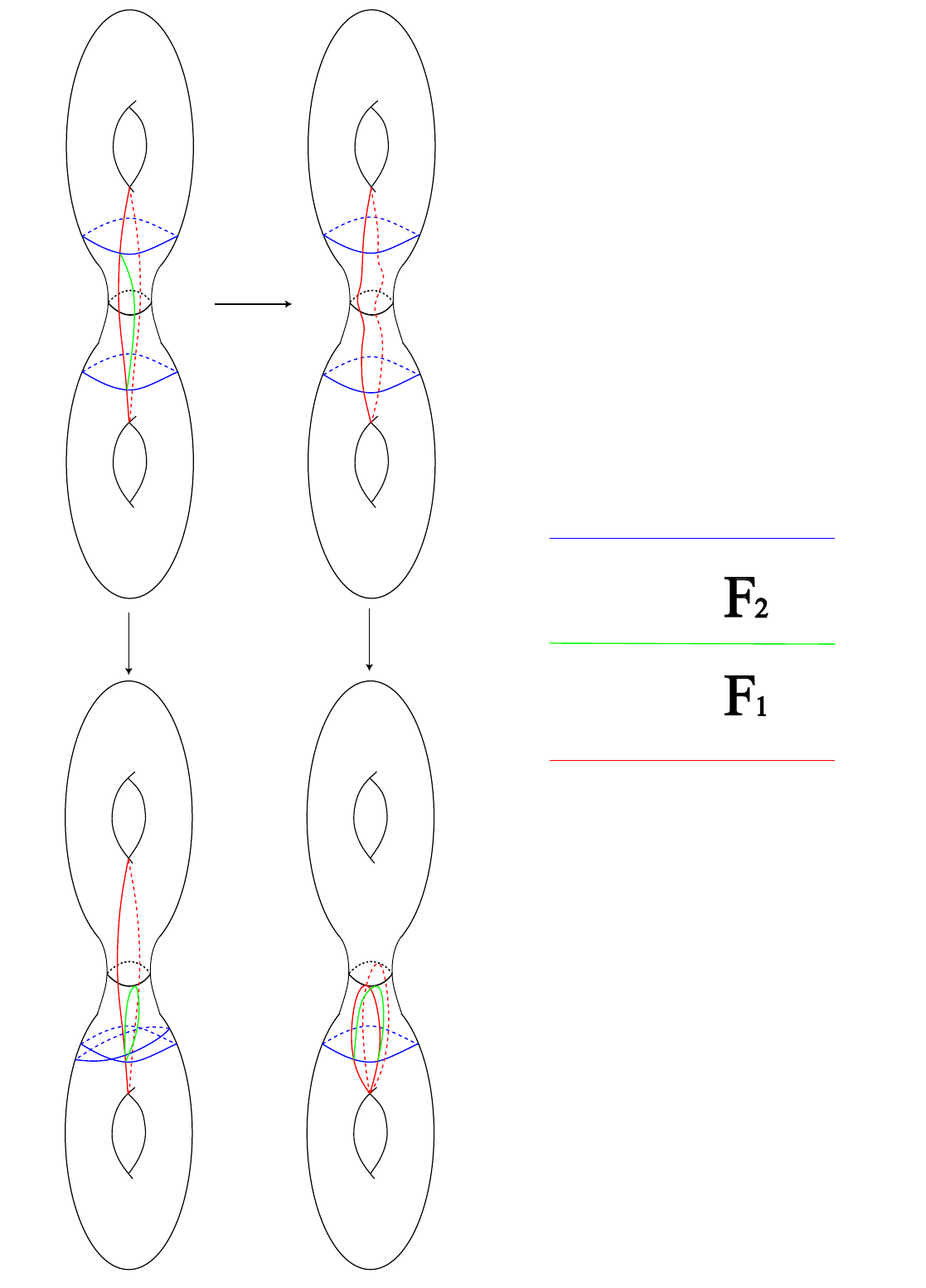}
\caption{The bottom-left and bottom-right genus two surfaces are denoted as $F_1$ and $F_2$, respectively. The red (blue) curve on the bottom-right (left) surface is the result of performing the Lagrangian composition of the red (blue) curve on the bottom-left (right) surface with the genus two surface $F$.}
\label{fig:expwithbubble}
\end{figure}
    The Lagrangian correspondence $(g_1,g_2):F\looparrowright F_1\times F_2$ is illustrated in Figure \ref{fig:expwithbubble}. The horizontal map represents the {0-Dehn twist} (defined as ``good map" in \cite{zhang2024singularities}) along the middle circles \cite{zhang2024singularities}. Both vertical maps are folds along the central black circles: the left vertical map is $g_1$, and $g_2$ is the composition of the horizontal and the right vertical maps. According to the result in \cite{zhang2024construction}, there is a holomorphic quilt whose bottom layer goes to the bigon bounded by the red and the green curves in the $F_1$ and the top layer corresponds the bigon corresponds to the bigon bounded by the green and blue curve in $F_2$.
    
    We prove that there exists a figure-eight bubbling when performing the $F_1$-strip-shrinking (shrinking the bottom layer). If it were not the case, then the family holomorphic quilt $(u^t_1(s,t),u^t_2(s,t+1))$, corresponding to the strip-shrinking process, would have a uniform $C^0$ upper bound for its first derivative as $t\rightarrow1$. By applying the mean-value theorem from mathematical analysis, the inequality below holds:
    \[
    \|u^t_1(s,1)-u^t_1(s,\delta_t)\|_{C^0}\le \|du^t\|_{C^0}(1-\delta_t)\quad \forall s\in\R.
    \]
    Therefore the green and blue boundaries of the bigon in $F_1$ must coincide as $t\rightarrow1$. But this is impossible since the green curve has to intersect the image of the folding circle. Thus, the first derivative of $u^t_1$ blows up as $t\rightarrow1$. If the complex structure on $F_1$ and $F_2$ are assumed to be generic in the sense of \cite{bottman2018gromov}, then shrinking the bottom layer of this quilt produces a figure-eight bubbling between $F$ and $L_2$. The same argument can be applied to the top layer ($F_2$-strip-shrinking) of the quilt in Figure \ref{fig:expwithbubble}, one gets a figure-eight bubbling between $L_1$ and $F$.
\end{ex}

\section{Uniqueness Of Holomorphic Quilts}\label{sec:4}
Suppose that $(F_1,\omega_1)$ and $(F_2,\omega_2)$ are closed symplectic surfaces with compatible complex structures $J_1$ and $J_2$ respectively. Let $L_i$ be Lagrangian immersions into $F_i$ for $i=1,2$ and let $(g_1,g_2):F\looparrowright(F_1\times F_2,\omega_1\times(-\omega_2))$ be a Lagrangian immersion. Assume further that $L_i$ is composable with $F$ for $i=1,2$. Let $u$ be a holomorphic disc connecting two elements in $L_1\times_{F_1} (F\circ L_2)$ with boundary in $L_1$ and $F\circ L_2$. The author proved that if $\im(u)\subset\im(g_1)$, then $u$ can be lifted to a holomorphic quilt of the tuple $(L_1,F,L_2;F_1,F_2)$ \cite{zhang2024construction}. {\bf The goal of this section is to prove the main theorem, namely, that this lift is unique.}

The proof of the main theorem differs from Wehrheim and Woodward’s approach \cite{wehrheim2009floer}, as figure-eight bubbling is always present. For the reader's convenience, the main theorem is restated here.

\begin{usethmcounterof}{thm:main}
    Let $(F_1.\omega_1)$ and $(F_2,\omega_2)$ be two symplectic closed surfaces equipped with compatible almost complex structures. Suppose $L_i\looparrowright F_i$ $\mathrm{(}i=1,2\mathrm{)}$ and $F\looparrowright (F_1\times F_2,\omega_1\times(-\omega_2))$ are Lagrangian immersions such that $L_1\times L_2$ intersects $F$ transversely in $F_1\times F_2$. Furthermore, assume $F$ and $L_2$ are composable. Assume that
    \[
    u(x,y):\R\times[0,1]\rightarrow F_2
    \]
    is a holomorphic map such that     
    \begin{itemize}
        \item $\lim_{x\rightarrow\pm\infty}u(x,y)=x_{\pm}$, where $x_\pm$ are two points in the generalized intersection of $L_1\circ F$ and $L_2$,
        \item $u$ has its boundary in $L_1\circ F$ and $L_2$ $($Definition \ref{def:lagbd}$)$,
        \item the image of $u$ is contained in the image of $F\looparrowright F_1\times F_2$ after projecting to the first factor.
    \end{itemize} 
    Then there is a unique holomorphic map $\tilde u(x,y):\R\times[0,1]\rightarrow F_1\times F_2$ such that
    \begin{itemize}
        \item $\lim_{x\rightarrow\pm\infty}\tilde u(x,y)=x_{\pm}$, where $\tilde x_\pm$ are two points in the generalized intersection of $L_1$ and $F\circ L_2$ corresponding to $x_\pm$ by using the canonical identification in Proposition \ref{prop:sg},
        \item $\tilde u$ has its boundary in $L_1\times L_2$ and $F$,
        \item the image of $\tilde{u}|_{\R\times\{0\}}$ in the $L_2$ component is the same as $\im(u|_{\R\times\{1\}})$ in the sense of set. 
    \end{itemize}
\end{usethmcounterof}

\begin{proof}
    The proof in Wehrheim-Woodward \cite{wehrheim2009floer} relies on the boundedness of the first derivative of the holomorphic quilts during the strip-shrinking process. However, as shown in the previous section, figure-eight bubblings inevitably occur. Therefore, the strategy in \cite{wehrheim2009floer} is no longer applicable. The main idea here is to remove a neighborhood around the bubbling points, as well as a thin rectangle connecting this neighborhood to the boundary (see the left picture of Figure \ref{fig:keyholwremove}), and then apply the Riemann mapping theorem.

    Suppose $(u_1,u_2)$ and $(w_1,w_2)$ are two smoothly isotopic holomorphic quilts connecting the same pair of generalized intersections $((x_+,y_+),(x_-,y_-))$ of $F$ and $L_1\times L_2$. Because both $u_2$ and $w_2$ are holomorphic discs connecting $y_\pm$, $u_2$ and $w_2$ are smoothly isotopic. By applying the $F_1$-strip-shrinking to $(u_1,u_2)$ and $(w_1,w_2)$, we obtain two families of holomorphic quilts $(u_1^t,u^t_2)$ and $(w^t_1,w^t_2)$ starting at $(u_1,u_2)$ and $(w_1,w_2)$ for $t\in[0,1)$. To be more precise,
    \begin{itemize}
        \item the smooth maps
        \[
        u_1^t,\ w_1^t:\R\times[\delta^t,1]\rightarrow F_1;\ \ u_2^t,\ w_2^t:\R\times[1,2]\rightarrow F_2
        \]
        are holomorphic for $t\in[0,1)$, where $\delta^t$ is an increasing function with respect to $t$ such that $\delta^0=0$ and $\delta^1=1$,
        \item the maps $u_1^t(s,\delta^t)$, $u_2^t(s,2)$, and $(u_1^t,u_2^t)(s,1)$ can be lifted to $L_1$, $L_2$, and $F$ respectively for $t\in[0,1)$,
        \item the maps $w_1^t(s,\delta^t)$, $w_2^t(s,2)$, and $(w_1^t,w_2^t)(s,1)$ can be lifted to $L_1$, $L_2$, and $F$ respectively for $t\in[0,1)$.
    \end{itemize}    
    Without loss of generality, assume $u^t_2(0,2)=w^t_2(0,2)$.

    Let $\{b_j\}_{j=1}^l$ denote the set of points where the derivatives of $(u_1,u_2)$ and $(w_1,w_2)$ blow up during the strip-shrinking process. Since the complex structures on the symplectic manifolds are assumed to be generic, there are only figure-eight bubblings in the strip-shrinking process \cite{wehrheim2009floer}. According to Gromov compactness, there are small discs $B_{\epsilon^t_j}(s_j^t,o_j^t)\subset \R\times[1,2]$ around $(s_j^t,o_j^t)$ with radius $\epsilon^t_j$ such that
    \begin{itemize}
        \item $\lim_{t\rightarrow 1}u_2^t(s_j^t,o_j^t)=\lim_{t\rightarrow 1}w_2^t(s_j^t,o_j^t)=b_j$,
        \item $u_2^t$ and $w_2^t$ converges uniformly outside $B_{\epsilon_j^t}(s_j^t,o^t_j)$ as $t\rightarrow1$.
    \end{itemize}
    \begin{figure}[H]
    \centering 
    \includegraphics[width=0.8\textwidth]{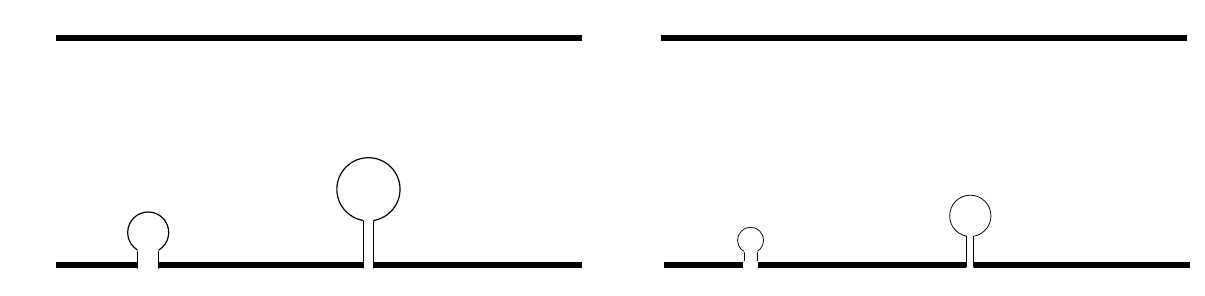}
    \caption{As $t$ tends to 1, the keyholes become smaller and smaller as shown from left to the right in the above picture.}
    \label{fig:keyholwremove}
    \end{figure}
    By removing $B_{\epsilon^t_j}(s_j^t,o_j^t)$ as well as a thin rectangle connecting $B_{\epsilon^t_j}(s_j^t,o_j^t)$ and the boundary $\R\times\{1\}$ from $\R\times[1,2]$, we obtain a family of simply connected sets $S^t$ (Figure \ref{fig:keyholwremove}). The width of these thin rectangles are chosen to converge to 0 as $t\rightarrow1$. Then $u_2^t|_{S^t}$ and $w_2^t|_{S^t}$ are defined on simply connected spaces. By applying the Riemann mapping theorem, we may assume that the domains and the target of $u^t_2|_{S^t}$ and $w_2^t|_{S^t}$ are the unit disc in $\C$, with the following conditions:
    \begin{itemize}
        \item $u^t_2(1,0)=w^t_2(1,0)=(1,0)$, 
        \item $u^t_2(-1,0)=w^t_2(-1,0)=(-1,0)$,
        \item $u^t_2(0,-1)=w^t_2(0,-1)$ (this comes from the assumption $u^t_2(0,2)=w^t_2(0,2)$).
    \end{itemize}
    Since $u_2^t|_{S^t}$ and $w_2^t|_{S^t}$ are holomorphic, then $u_2^t|_{S^t}=w_2^t|_{S^t}$ according to complex analysis. Therefore, $u_2^t=w_2^t$ by the unique continuation theorem in complex analysis. Because $(u_1^t,u^t_2)|_{\R\times\{1\}}$ can be lifted to $F$, $u_1^t|_{\R\times\{1\}}=w_1^t|_{\R\times\{1\}}$ is uniquely determined. By identifying the domains and the targets of $u^t_1$ and $w_1^t$ with the unit disc in $\C$ using the Riemann mapping theorem and combining the fact that $u_1^t$ and $w_1^t$ are uniquely determined by the value restricted on ${\R\times\{1\}}$. As a result, $u_1^t=w_1^t$. This finishes the proof of the theorem.
\end{proof}

\section{Examples of the Quilted Lagrangian Floer Theory}\label{sec:5}
In this section, some examples of the quilted Lagrangian Floer theory are calculated using the techniques that we have developed so far. The final example confirms the conjecture in Section 10.4 of Cazassus-
Herald-Kirk-Kotelskiy \cite{cazassus2020correspondence}.

\begin{ex}
    This example is contiues Example \ref{exp:3.3} (see Figure \ref{fig:expwithbubble}). As discussed in Example \ref{exp:3.3}, there is a holomorphic quilt as depicted in Figure \ref{fig:expwithbubble}. According to Theorem \ref{thm:main}, there is only one holomorphic quilt that is topologically homotopic to the one shown in the figure. Notice that there is a holomorphic quilt connecting the two generators in the backside of Figure \ref{fig:expwithbubble}. Therefore, the quilted Lagrangian Floer homology corresponding to Figure \ref{fig:expwithbubble} is trivial.
\end{ex}

    \begin{figure}[H] 
    \centering 
    \includegraphics[width=0.4\textwidth]{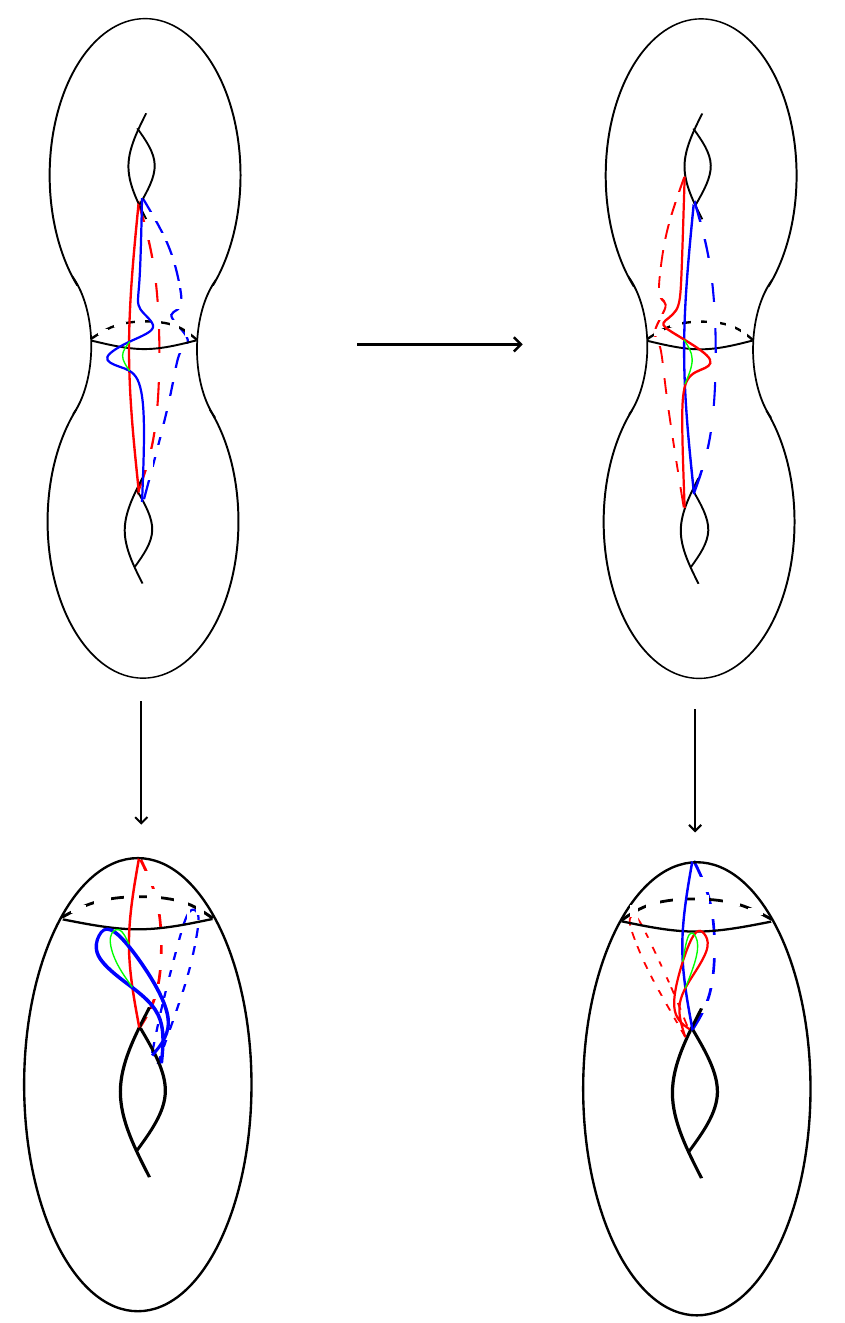}
    \caption{The bottom-left and bottom-right genus-two surfaces are $F_1$ and $F_2$ respectively. The red (blue) curve on the bottom-right (left) surface is obtained by performing the Lagrangian composition of the red (blue) curve on the bottom-left (right) surface with the genus-two surface $F$.}
    \label{fig:Expquilted1}
    \end{figure}

\begin{ex}
    The Lagrangian correspondence $(g_1,g_2):F\looparrowright F_1\times F_2$ is illustrated in Figure \ref{fig:Expquilted1}. The horizontal map represents the {0-Dehn twist} along the middle circles \cite{zhang2024singularities}. Both vertical maps are folds along the central black circles: the left vertical map is $g_1$, and $g_2$ is the composition of the horizontal and the right vertical maps. According to the result in \cite{zhang2024construction} and Theorem \ref{thm:main}, there is a unique holomorphic quilt (the right part of Figure \ref{fig:Expquilted1}) whose bottom layer maps to the bigon bounded by the red and green curves in $F_1$ and the top layer maps to the bigon bounded by the green and blue curve in $F_2$. As a result, the quilted Lagrangian Floer homology is trivial.
\end{ex}

\begin{ex}
    The Lagrangian correspondence $(g_1,g_2):F\looparrowright F_1\times F_2$ is illustrated in Figure \ref{fig:Expquilted2}. The horizontal map represents the {0-Dehn twist} (it is defined as ``good map" in \cite{zhang2024singularities}) along the two circles \cite{zhang2024singularities} on the bubble. Both vertical maps are by pressing the bubble to the upper half part of the genus two surface. According to the result in \cite{zhang2024construction} and Theorem \ref{thm:main}, there is a unique holomorphic quilt (the right side of Figure \ref{fig:Expquilted2}) whose bottom layer maps to the bigon bounded by the blue and the green curves in the $F_1$ and the top layer maps to the bigon bounded by the green and red curve in $F_2$. As a result, the quilted Lagrangian Floer homology is trivial.

    \begin{figure}[H] 
    \centering 
    \includegraphics[width=1\textwidth]{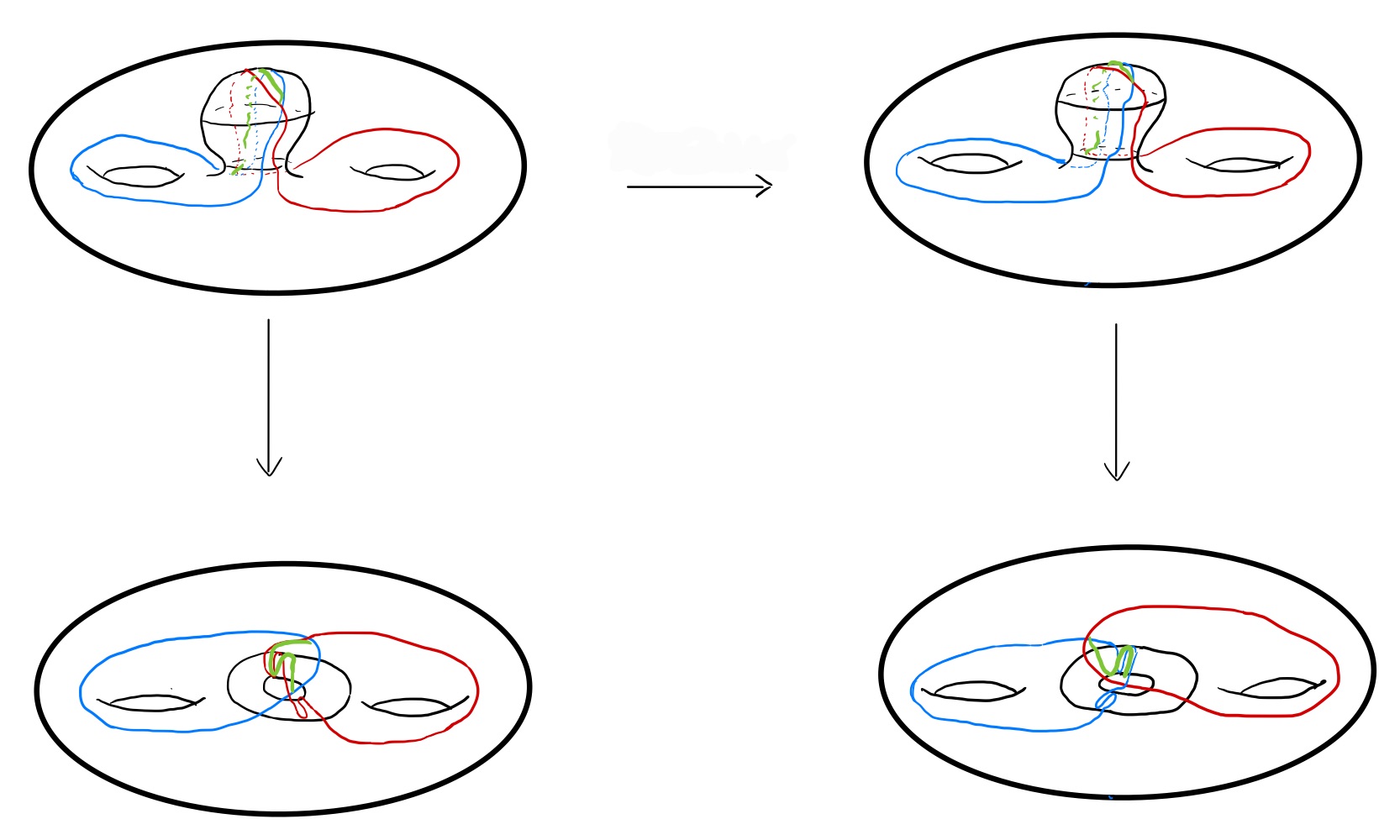}
    \caption{The bottom-left and bottom-right genus-two surfaces are $F_1$ and $F_2$ respectively. The blue (red) curve on the bottom-right (left) surface is obtained by performing the Lagrangian composition of the blue (red) curve on the bottom-left (right) surface with the genus-two surface $F$.}
    \label{fig:Expquilted2}
    \end{figure}
\end{ex}

\begin{ex}
    In Section 10.4 of Cazassus-Herald-Kirk-Kotelskiy \cite{cazassus2020correspondence}, the authors conjecture that there is a unique holomorphic quilt corresponding to the immersed smooth quilt depicted in Figure 22. This assertion holds true, by the same argument used in the previous examples. Therefore, a figure-eight bubbling necessarily appears when performing strip-shrinking in \cite{cazassus2020correspondence}.
\end{ex}

\bibliographystyle{plain}
\bibliography{references}

\end{document}